\title{Extremely amenable groups via continuous logic}
\author{Julien Melleray and Todor Tsankov}
\begin{document}

\begin{abstract}
We establish a characterization of extreme amenability of any Polish group $G$ in Fra\"iss\'e-theoretic terms in the setting of continuous logic, mirroring a theorem due to Kechris, Pestov and Todorcevic for subgroups of $S_{\infty}$. 
\end{abstract}
\maketitle
\section{Introduction}

These notes are concerned with the theory of \emph{extremely amenable} Polish groups, i.e.~Polish groups $G$ with the following property: whenever $G$ acts continuously on a 
compact metric space $X$, there exists $x \in X$ such that $g \cdot x=x$ for all $g \in G$.

The question of the existence of such topological groups was asked in the late sixties, and the first example was published in 1975 by Christensen and Herer. The group in question was a so-called ``exotic'' Polish group, and it was initially thought that extreme amenability itself was a pathological property, especially since it 
was noticed early on that there exists no locally compact extremely amenable topological group. Since then, however, numerous ``natural'' Polish groups have been proved 
to be extremely amenable; examples include the unitary group of an infinite-dimensional separable Hilbert space (Gromov-Milman~\cite{Gromov1983}), the automorphism group of a standard probability space (Giordano-Pestov~\cite{Giordano2007}), and the isometry group of the universal Urysohn metric space (Pestov \cite{Pestov2002}). For a detailed discussion of the theory of extremely amenable 
Polish groups, as well as detailed bibliographical references, we refer the reader to Pestov's book \cite{Pestov2006a}, from which the quick historical discussion above was taken.

In the seminal article \cite{Kechris2005}, Kechris, Pestov and Todorcevic provided a characterization of extremely amenable subgroups of $S_{\infty}$, the permutation 
group of an infinite countable set. These groups may naturally be seen as automorphism groups of countable first-order structures, and the characterization obtained in 
\cite{Kechris2005} is in terms of a combinatorial property of Fra\"iss\'e classes, the so-called \emph{Ramsey property}. This characterization, which completely captures 
the combinatorial content of extreme amenability for subgroups of $S_{\infty}$, has led to some new examples of extremely amenable Polish groups, and one may also hope to use this connection between topological dynamics and combinatorics to obtain some new Ramsey-type theorems. However, the characterization of \cite{Kechris2005} cannot be applied to general Polish groups. 

In this article, we use the framework of 
\emph{continuous logic}, which was introduced in \cite{BenYaacov2010a}, and a notion of Fra\"iss\'e class adapted to that context -which we describe in detail in the next section- to provide a characterization of extreme amenability for Polish groups in terms of an ''approximate Ramsey property'' (which, in the discrete context, boils down to the characterization of \cite{Kechris2005}). Fra\"iss\'e classes were first studied in the setting of continuous logic in \cite{Schoretsanitis2007}; the more streamlined and efficient approach we follow here is due to Ben Yaacov \cite{BenYaacov2014}.

A very similar characterization of extreme amenability, in the more 
restrictive context of isometry groups of ultrahomogeneous Polish metric spaces, had been obtained earlier by Pestov in \cite{Pestov2002}. We then describe a method 
that may be used to prove that a metric Fra\"iss\'e class satisfies the approximate Ramsey property. This method is based on the concentration of measure phenomenon, and is at heart just a formalization of Pestov's ideas as presented in his book \cite{Pestov2006a}. 
Our method applies, for instance, to recover the (well-known) results that the isometry group of Urysohn's universal metric space and the automorphism group of a standard probability space are extremely amenable. It is unclear whether our approach can be used to obtain interesting new examples of extremely amenable Polish groups.\\

\noindent \emph{Acknowledgments.} We thank Ita\"i Ben Yaacov for numerous discussions about metric Fra\"iss\'e classes; the exposition of the basics of metric Fra\"iss\'e theory in the next section is based on the content of those discussions and his research notes \cite{BenYaacov2014}. Work on this article was initiated during the authors' visit of the Fields Institute in Toronto in October--November 2010 to attend the thematic program on Asymptotic Geometric Analysis organized by V. Milman, V. Pestov and N. Tomczak--Jaegerman. We are very grateful for the excellent working environment that we enjoyed during this visit. Work on this project was partially funded by ANR projects AGORA and GRUPOLOCO.

\section{Metric Fra\"iss\'e classes}

In this section we explain how Fra\"iss\'e classes are defined in the metric 
context. These classes were first considered in \cite{Schoretsanitis2007}, but the approach we follow is entirely borrowed from Ben Yaacov's work \cite{BenYaacov2014}. We adopt a slightly less general point of view than in \cite{BenYaacov2014}, which nevertheless seems to be sufficient to cover all natural examples.

\begin{defn}
A \emph{language} $\mcL$ is a set of \emph{relational symbols} $(R_i)_{i \in I}$ and \emph{functional symbols} $(f_j)_{j \in J}$. To each symbol of the language, two numbers are attached: its \emph{arity} (a natural number) and its \emph{Lipschitz constant} (a nonnegative real number). We allow no $0$-ary relations, while $0$-ary functions are allowed (and must be understood as constants).

A $\mcL$-structure $\bA$ consists of a \emph{complete} metric space $(A,d)$, along with:
\begin{itemize}
\item A function $R^{\bA} \colon A^n \to \R$ for each $n$-ary relational symbol of $\mcL$.
\item A function $f^{\bA} \colon A^n \to A$ for each $n$-ary functional symbol of $\mcL$.
\end{itemize}
Interpretations of both relational and functional symbols are additionally required to be Lipschitz, the Lipschitz constant being (at worst) the one specified in the language. For this to make sense, we must say that in this section we always endow products of metric spaces with the sup-metric: for $\bar a,\bar b \in A^n$, we set 
$d(\bar a, \bar b)= \max d(a_i,b_i)$.

Finally, if the underlying metric space $(A,d)$ of a metric structure $\bA$ is separable, we say that $\bA$ is a \emph{Polish} metric structure.
\end{defn}

We always assume that our languages contain a special binary symbol, which is interpreted by the distance function in all structures. This is analoguous to the way one usually treats the equality symbol in classical first-order logic and, indeed, the distance symbol is to be thought of as measuring how far two things are from being equal. When the distance is discrete (i.e.~ takes only the values $0$ and $1$), one recovers the usual notions of first-order logic.

\begin{defn}
When $\bA$ is a $\mcL$-structure, and $\bar a \in A^n$, the \emph{substructure} generated by $\bar a$ is the smallest \emph{closed} subset $B$ of $A$ containing $\bar a$ and stable under interpretations of all functions in $\mcL$. We turn it into a $\mcL$-structure in the natural way, and denote by $\str a$ the structure generated by $\bar a$.

A \emph{morphism} from a $\mcL$-structure $\bA$ to another $\mcL$-structure $\bB$ is a map $\phi$ which commutes with interpretation of the symbols of $\mcL$, i.e.~such that:
\begin{itemize}
\item For any $n$-ary relational symbol $R$ of $\mcL$, and any $\bar a \in A^n$, one has $R^{\bA}(\bar a)=R^{\bB}(\phi ( \bar a))$.
\item For any $n$-ary functional symbol $f$ of $\mcL$, and any $\bar a \in A^n$, one has $\phi (f^{\bA}(\bar a))= f^{\bB}(\phi (\bar a))$.
\end{itemize}
Note that, since the distance function is always part of our languages, a morphism from $\bA$ to $\bB$ always induces an isometric embedding from $(A,d)$ to $(B,d)$.
\end{defn}

\begin{remark*}
Our conventions here are both more, and less, relaxed than the usual definitions of first-order logic: more, because we allow unbounded spaces and maps; less, because we only allow Lipschitz maps. The first part is justified by the fact that one does not need the compactness theorem of first-order logic in order to have a satisfactory analogue of Fra\"iss\'e theory; the second part is simply a matter of convenience, as it makes exposition  a bit simpler to follow and all our examples may be expressed using a Lipschitz language. The approach of \cite{BenYaacov2014} is more general than the one we adopt here, and everything in the present article could be made to work in that setting.
\end{remark*}

\begin{defn}
We say that a $\mcL$-structure $\bA$ is \emph{homogeneous} if it satisfies the following condition: whenever $\bar a \in A^n$ is a finite tuple, and $\phi \colon 
\str a \to \bA$ is a morphism, there exists for any $\varepsilon >0$ an automorphism $\psi$ of $\bA$ satisfying $d(\phi(\bar a), \psi(\bar a)) < \varepsilon$.
\end{defn}

In classical (discrete) first-order logic, countable homogeneous structures can be characterized by properties of the class of their finitely-generated substructures; this is the starting point of Fra\"iss\'e theory. It turns out that this extends to the continuous setting. 

\begin{defn}
The \emph{age} of an $\mcL$-structure $\bA$ is the class of all finitely generated $\mcL$-structures which embed in $\bA$. 
\end{defn}

Let us begin by listing two properties that must be satisfied by the age of any structure.

\begin{defn}
We say that a class $\mcK$ of finitely generated $\mcL$-structures satisfies:
\begin{itemize}
\item the \emph{hereditary property} (HP) if every finitely generated substructure of an element of $\mcK$ also belongs to $\mcK$.

\item the \emph{joint embedding property} (JEP) if every two members of $\mcK$ embed simultaneously in a third one. 
\end{itemize}
\end{defn}

Homogeneity of a structure imposes further conditions on its age.

\begin{defn}
We say that a class $\mcK$ of finitely generated $\mcL$-structures satisfies the \emph{near amalgamation property} (NAP) if, whenever $\bA, \bB_0, \bB_1$ are elements of $\mcK$, and $\phi_i \colon \bA \to \bB_i$ are morphisms, there exists for any finite tuple $\bar a \in A^n$ and any $\varepsilon >0$ some $\bC \in K$ and morphisms $\psi_i \colon \bB_i \to C$ such that $d(\psi_0 \phi_0(\bar a), \psi_1 \phi_1(\bar a)) < \varepsilon$. 
\end{defn}

The final condition one must impose on the age of a separable, homogeneous structure is the analogue of countability in the discrete setting. It splits in two parts: separability, and completeness. To state it we must first introduce a family of pseudo-metrics.

\begin{defn}
Let $\mcK$ be a class of finitely generated structures satisfying (HP), (JEP) and (NAP). Denote by $\mcK_n$ the class of all pairs $(\bar a, \bA)$ where $\bA \in \mcK$ and $\bar a \in A^n$ generates $\bA$ (using a convenient abuse of notation, in the following we will often write $\str a \in \mcK_n$, or even $\str a \in \mcK$).

Then define a pseudo-metric $d_n$ on $\mcK_ n$ by setting 
$$d_n(\str a, \str b)= \inf_{\varphi, \psi} d(\varphi(\bar a),\psi(\bar b))$$
where $\varphi, \psi$ range over all possible embeddings of $\str a, \str b$ into a common structure $\bC \in \mcK$.

Checking that $d_n$ is indeed a pseudo-metric is left to the reader; the fact that it is well-defined follows from JEP, while the triangle inequality follows from NAP.
\end{defn}

\begin{defn}
We say that a class of finitely generated $\mcL$-structures $\mcK$ satisfies the \emph{Polish property} (PP) if each $d_n$ is separable and complete.
\end{defn}

\begin{defn}
A class of finitely generated $\mcL$-structures $\mcK$ is said to be a \emph{Fra\"iss\'e class} if it satisfies HP, JEP, NAP and PP.
\end{defn}

We now have the following results, which in this generality are due to Ben Yaacov \cite{BenYaacov2014}.

\begin{theorem}[Ben Yaacov \cite{BenYaacov2014}]
For a Polish $\mcL$-structure $\bA$, the following are equivalent:
\begin{itemize}
\item $\bA$ is homogeneous.
\item The age of $\bA$ is a Fra\"iss\'e class.
\end{itemize}
\end{theorem}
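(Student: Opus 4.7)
The plan is to prove the equivalence by treating the two implications separately. Direction $(1) \Rightarrow (2)$ is a matter of verifying each of the four axioms by appealing to homogeneity of $\bA$, whereas the converse $(2) \Rightarrow (1)$ is the substantive direction and proceeds by an approximate back-and-forth argument, in which (NAP) plays the role usually occupied by exact amalgamation and the hypothesis that $\mcK$ is the age of $\bA$ is used at each stage to lift an amalgam back into $\bA$ itself.

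For $(1) \Rightarrow (2)$: (HP) is immediate from the definition of age. (JEP) follows by embedding both given structures $\bB_0, \bB_1$ into $\bA$ and taking any finitely generated substructure of $\bA$ containing the two images. For (NAP), given $\bA' \in \mcK$ and morphisms $\phi_i \colon \bA' \to \bB_i$, embed each $\bB_i$ into $\bA$ via some $\iota_i$; the composites $\iota_0 \phi_0, \iota_1 \phi_1 \colon \bA' \to \bA$ are two morphisms from a finitely generated structure into $\bA$, and homogeneity produces an automorphism $\sigma$ of $\bA$ with $d(\sigma \iota_0 \phi_0(\bar a), \iota_1 \phi_1(\bar a)) < \varepsilon$; any finitely generated substructure of $\bA$ containing $\sigma \iota_0(\bB_0) \cup \iota_1(\bB_1)$ then witnesses (NAP). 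For (PP), the assignment $\bar a \mapsto (\bar a, \str a)$ defines a $1$-Lipschitz surjection $A^n \to \mcK_n / d_n$, giving separability from separability of $A^n$; completeness follows by inductively realizing a $d_n$-Cauchy sequence $(\str{a_k})$ as a sequence of tuples in $\bA$, at each step shifting the new embedding by an automorphism of $\bA$ (via homogeneity) so as to align it with the previous one up to error $<2^{-k}$, and taking the limit, which exists in $A^n$ by Polishness of $\bA$.

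For $(2) \Rightarrow (1)$, fix $\bar a \in A^n$, a morphism $\phi \colon \str a \to \bA$, and $\varepsilon > 0$. Using a fixed dense sequence in $A$, we build by approximate back-and-forth two sequences of finite tuples $\bar u_k, \bar v_k$ in $A$ of common growing length, with $\bar u_0 = \bar a$ and $\bar v_0 = \phi(\bar a)$, so that both $\{u_i\}$ and $\{v_i\}$ accumulate densely in $A$ and there exist morphisms $\chi_k \colon \str{\bar u_k} \to \bA$ with $d(\chi_k(\bar u_k), \bar v_k)$ summably small (totalling less than $\varepsilon$). The inductive step alternates extending $\bar u_k$ (forth) and extending $\bar v_k$ (back) by the next element of the dense sequence; the matching element on the other side is produced by applying (NAP) to the current approximate isomorphism and then embedding the resulting amalgam back into $\bA$, which is possible because $\mcK$ is the age of $\bA$. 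For each fixed $i$, the entries $v_i^{(k)}$ thus form a Cauchy sequence in the complete space $A$; the limiting map $u_i \mapsto \lim_k v_i^{(k)}$ is an isometry between dense subsets of $A$, which extends by continuity to an automorphism $\psi$ of $\bA$ satisfying $d(\psi(\bar a), \phi(\bar a)) < \varepsilon$.

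The main obstacle lies in controlling the accumulated error in this back-and-forth: unlike in classical Fra\"iss\'e theory, one cannot arrange for each appeal to (NAP) to preserve the previously matched tuples exactly, so they must be allowed to drift by a small amount absorbed into a summable budget. The completeness portion of (PP) enters precisely here, ensuring that the sequence of abstract approximate isomorphisms converges to a genuine tuple in $\bA$, so that the resulting isometry extends to an actual automorphism of $\bA$ rather than only to some abstract completion.
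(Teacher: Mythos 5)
The paper offers no proof of this statement: it is quoted as a black box from \cite{BenYaacov2014}. So your proposal can only be judged on its own terms. Your direction $(1)\Rightarrow(2)$ is essentially right: HP and JEP are formal, NAP follows by applying homogeneity to the ``difference'' morphism between two realizations inside $\bA$, and your treatment of PP (a $1$-Lipschitz surjection $A^n\to\mcK_n$ for separability, and an automorphism-aligned realization of a $d_n$-Cauchy sequence as a Cauchy sequence of tuples in $A^n$ for completeness) is the standard and correct argument.

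The gap is in $(2)\Rightarrow(1)$, at the phrase ``embedding the resulting amalgam back into $\bA$, which is possible because $\mcK$ is the age of $\bA$.'' Membership of the amalgam $\bC$ in the age only provides \emph{some} embedding of $\bC$ into $\bA$; it gives no control over where that embedding sends the copy of $\langle \bar v_k\rangle$ already sitting inside $\bA$, so the back-and-forth cannot be continued. What the argument actually requires is the finite extension property ($\mcK$-saturation): for every finitely generated $\bB\le\bA$, every $\bC\in\mcK$ containing $\bB$ and every $\varepsilon>0$, there is an embedding $\bC\to\bA$ that is $\varepsilon$-close to the inclusion on the generators of $\bB$. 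This is what singles out the Fra\"iss\'e limit among separable structures with age $\mcK$, and it does \emph{not} follow from the age being a Fra\"iss\'e class. Indeed, taken literally the implication is false: with the discrete $\{0,1\}$ distance, the structure $(\mathbb{Z},<)$ has as its age the class of all finite linear orders, which satisfies HP, JEP, exact amalgamation and PP (finitely many pointed isomorphism types in each $\mcK_n$), yet the morphism $0\mapsto 0$, $1\mapsto 5$ is approximated by no automorphism of $\mathbb{Z}$. (The paper's very next theorem, asserting a \emph{unique homogeneous} structure with age $\mcK$, implicitly concedes that non-homogeneous structures can share that age.) The result actually proved in \cite{BenYaacov2014} is existential --- a class is a Fra\"iss\'e class if and only if it is the age of \emph{some} homogeneous separable structure --- and your back-and-forth, whose error bookkeeping is otherwise fine and is not the real difficulty, is the right engine only once it is run inside a structure already known to be $\mcK$-saturated, typically built as an inductive limit using NAP and PP.
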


\begin{theorem}[Ben Yaacov \cite{BenYaacov2014}]
Given a Fra\"iss\'e class $\mcK$ of finitely generated $\mcL$-structures, there is (up to isomorphism) a unique homogeneous $\mcL$-structure whose age is equal to $\mcK$. This structure is called the \emph{Fra\"iss\'e limit} of $\mcK$.
\end{theorem}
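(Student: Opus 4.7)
The plan is to prove existence and uniqueness by the standard Fra\"iss\'e back-and-forth construction, adapted to handle the metric approximation built into NAP and PP.

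\textbf{Existence.} I would build the Fra\"iss\'e limit as the metric completion of an increasing chain $\bA_0 \hookrightarrow \bA_1 \hookrightarrow \cdots$ of structures in $\mcK$. By separability of each $(\mcK_n,d_n)$, fix countable dense subsets and enumerate a countable list of ``tasks'' of two kinds: (a)~for each $\bB \in \mcK$ chosen from a dense set, ensure $\bB$ eventually embeds into some $\bA_k$ (this uses JEP); (b)~for each triple $(\bar a, \bB_0, \bB_1)$ and each tolerance $\varepsilon > 0$ from a countable dense family with $\bar a$ generating a substructure already embedded in some $\bA_i$ and a chosen morphism $\phi_0:\langle\bar a\rangle \to \bB_0$, ensure that $\bB_0$ embeds into some later $\bA_k$ in a way that $\varepsilon$-approximates the corresponding embedding of $\bB_1 = \bA_i$ (this uses NAP). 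A standard dovetailing arranges that all tasks are eventually realized. Let $\bM$ be the completion of $\bigcup_n \bA_n$: relations and functions extend uniquely to the completion by their Lipschitz bounds, yielding an $\mcL$-structure.

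\textbf{Age and homogeneity of $\bM$.} To see that the age of $\bM$ equals $\mcK$, note that every $\bB \in \mcK$ embeds into $\bM$ by construction. Conversely, if $\bar m \in M^n$, approximate $\bar m$ by tuples $\bar a^{(k)}$ in $\bigcup_n \bA_n$; the classes $\langle \bar a^{(k)}\rangle$ lie in $\mcK_n$ and form a $d_n$-Cauchy sequence (by the Lipschitz control on the induced embeddings), so by completeness of $d_n$ (the PP hypothesis) their limit lies in $\mcK_n$, and this limit is isomorphic to the substructure of $\bM$ generated by $\bar m$. Homogeneity is verified by a back-and-forth argument: given $\bar a \in M^n$ and a morphism $\phi:\langle \bar a\rangle \to \bM$, approximate both $\bar a$ and $\phi$ by data sitting in some $\bA_i$, then use the task of type (b) repeatedly, alternating between the two sides, to produce an automorphism of $\bigcup_n \bA_n$ that $\varepsilon$-matches $\phi$ on $\bar a$; taking the completion gives the required automorphism of $\bM$.

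\textbf{Uniqueness.} Given two homogeneous Polish $\mcL$-structures $\bA,\bB$ with common age $\mcK$, I would run a back-and-forth between countable dense subsets $(a_n)$ of $A$ and $(b_n)$ of $B$. At stage $n$ one has an approximate partial isomorphism between finite tuples, which by NAP inside $\mcK$ can be realized up to error $\varepsilon_n$ in some common element of $\mcK$, and then by homogeneity of $\bA$ and $\bB$ in turn lifted to a small perturbation of the current partial match within the ambient structures. Choosing $\varepsilon_n$ summable (and small compared to the spacing between tuples handled so far) makes the back-and-forth sequence Cauchy at every coordinate, and the limit is an isometric bijection commuting with all interpretations.

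\textbf{Main obstacle.} The delicate point is the existence part: ensuring that the age of the completion is not strictly larger than $\mcK$. This is exactly where the completeness half of (PP) is essential, since it guarantees that the $d_n$-limits of tuples appearing in $\bigcup_n \bA_n$ still correspond to structures in $\mcK$. The bookkeeping in the back-and-forth also requires some care, because one has to arrange simultaneously that each approximation is tight enough to converge and loose enough to allow the next NAP-step to succeed; this is handled by choosing the tolerances $\varepsilon_n$ in advance with rapid decay.
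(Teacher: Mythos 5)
The paper does not actually prove this theorem: it is quoted from Ben Yaacov's notes \cite{BenYaacov2014} and used as a black box, so there is no in-paper argument to compare against. Your outline is, however, the standard (and correct) strategy for the metric Fra\"iss\'e theorem, and it is essentially the argument of the cited source: build a generic chain realizing countably many JEP- and NAP-tasks drawn from dense families with shrinking tolerances, complete, and then run approximate back-and-forth arguments with summable errors for homogeneity and uniqueness. You also correctly locate the role of each hypothesis, in particular that the completeness half of (PP) is what keeps the age of the completion from outgrowing $\mcK$.

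Two spots in the existence part are thinner than you acknowledge. First, ``every $\bB \in \mcK$ embeds into the limit by construction'' is not quite right: the construction only handles a countable $d_n$-dense family, so for an arbitrary $\bB = \str b \in \mcK_n$ you must take a sequence $\bB_k \to \bB$ in $d_n$ with each $\bB_k$ embedded, and use homogeneity (or the NAP-tasks directly) to align successive embeddings $\phi_k$ into a Cauchy sequence whose limit embeds $\bB$; this is the same approximation device as in your uniqueness argument, but it needs to be said. Second, identifying the $d_n$-limit of the classes $\langle \bar a^{(k)}\rangle$ with the substructure generated by $\bar m$ in the completion requires knowing that the assignment $\bar a \mapsto [\str a] \in (\mcK_n, d_n)$ is $1$-Lipschitz on tuples of the limit structure and that convergence in $d_n$ controls the interpretations of all symbols on the generated substructure (which is where the Lipschitz bounds in the language enter). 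Neither point is a wrong turn --- both are routine once stated --- but they are exactly the places where the metric setting differs from the discrete one, so a complete write-up should make them explicit.
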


The relevance of the above considerations to the theory of Polish groups comes from the following observation (see \cite{Melleray2010}, Theorem 6): any Polish group is isomorphic to the automorphism group of some approximately ultrahomogeneous Polish metric structure (in a countable Lipschitz language)- i.e, given what we saw above, of a Fra\"iss\'e limit. 

To give a bit more detail, let $\bA$ be a Polish relational metric structure with universe $(A,d)$; we endow its automorphism group $G$ with the pointwise convergence topology, 
i.e.~the topology which has a basis of neighborhoods of $1$ given by 
$$\{g \in G \colon \forall a \in F \ d(g(a),a) < \varepsilon \}$$
where $\varepsilon >0$ and $F$ is a finite subset of $A$. This turns $G$ into a closed subgroup of the Polish group $\Iso(A)$, and so, when endowed with this topology, 
$G$ is a Polish group in its own right.

Whenever we consider the automorphism group of a Polish relational metric structure $\bA$ as a Polish group, we endow it with the topology described above 
(which we call the \emph{natural} Polish topology on $\text{Aut}(\bA)$). 

Now let us reformulate Theorem 6 of \cite{Melleray2010} using the language of Fra\"iss\'e limits.

\begin{prop}\label{PolishFraisse}
Let $G$ be a Polish group. Then there exists a countable language $\mcL$, and a Fra\"iss\'e class $\mcK$ of finitely generated $\mcL$-structures, such that $G$ is isomorphic as a topological group to the automorphism group of the Fra\"iss\'e limit of $\mcK$. 
\end{prop}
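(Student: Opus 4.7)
The plan is to combine Theorem 6 of \cite{Melleray2010}, which was mentioned just above the statement, with the two theorems of Ben Yaacov quoted earlier in this section. First, I would apply Theorem 6 of \cite{Melleray2010} to obtain a countable Lipschitz relational language $\mcL$ and an approximately ultrahomogeneous Polish $\mcL$-structure $\bA$ whose automorphism group, endowed with the natural Polish topology, is topologically isomorphic to $G$.

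Next, I would verify that the two notions of homogeneity coincide in this situation. Since $\mcL$ is purely relational, for any finite tuple $\bar a \in A^n$ the substructure $\str a$ generated by $\bar a$ is simply the finite set $\{a_1,\ldots,a_n\}$ equipped with the induced distance and the values of the relational symbols, and a morphism $\phi \colon \str a \to \bA$ is nothing other than an isometric embedding of $\{a_1,\ldots,a_n\}$ into $A$ that preserves the values of every relational symbol. Approximate ultrahomogeneity in the sense of \cite{Melleray2010} asserts exactly that any such $\phi$ can be approximated on $\bar a$, to within any prescribed $\varepsilon > 0$, by the restriction of an automorphism of $\bA$, which is precisely the homogeneity condition of the definition above. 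Thus $\bA$ is a homogeneous Polish $\mcL$-structure in the sense of this paper.

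At this point the first Ben Yaacov theorem quoted above guarantees that the age $\mcK$ of $\bA$ is a Fra\"iss\'e class of finitely generated $\mcL$-structures, and the second guarantees that $\bA$ is, up to isomorphism, the Fra\"iss\'e limit of $\mcK$. Consequently $G \cong \text{Aut}(\bA) \cong \text{Aut}(\text{Flim}(\mcK))$ as topological groups, and $\mcL$ is countable by construction, as required.

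The only genuinely delicate point is the identification of ``approximate ultrahomogeneity'' in the sense of \cite{Melleray2010} with the homogeneity condition introduced in this section; the remainder of the argument is a direct invocation of Ben Yaacov's theorems. In essence, the proposition is a dictionary translation between the vocabulary of \cite{Melleray2010} and that of metric Fra\"iss\'e theory as developed in \cite{BenYaacov2014}.
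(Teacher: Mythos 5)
Your proposal is correct and follows exactly the route the paper intends: the paper gives no separate proof, presenting the proposition as a direct reformulation of Theorem~6 of \cite{Melleray2010} combined with the two quoted theorems of Ben Yaacov. Your additional check that approximate ultrahomogeneity in the sense of \cite{Melleray2010} coincides with the homogeneity notion of this section (immediate in a relational language, where finitely generated substructures are finite sets) is the only point the paper leaves implicit, and you handle it correctly.
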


In theory, this means that whenever we state a general result about automorphism groups of metric Fra\"iss\'e limits, we are talking about all Polish groups. For 
practical purposes, this is far from true - for example, we know of no way to turn the homeomorphism group of the Hilbert cube into the automorphism group of a 
Fra\"iss\'e limit in such a way that combinatorial properties of the corresponding Fra\"iss\'e class may be understood. Also in theory, relational languages are enough; for practical purposes, it is much more convenient to allow functional symbols.

Let us finish this section by discussing a few examples.

\begin{itemize} 
\item The class of all finite metric spaces is a Fra\"iss\'e class, in the language containing only one binary relational symbol interpreted by the distance function in all structures. The Fra\"iss\'e limit of that class is Urysohn's universal metric space. One could also consider a bounded variant, allowing only metric spaces of diameter, say, less than $1$, and obtaining as a limit the Urysohn sphere.
\item The class of all finite probability algebras may be seen as a Fra\"iss\'e class in the language $\mcL_{prob}=(d,\vee, \wedge, {}^c{},\mathbold 0)$, with $\vee$, $\wedge$ being binary $2$-Lipschitz functional symbols which are interpreted by union and intersection respectively in all members of the class, and ${}^c{}$ being a unary $1$-Lipschitz functional symbol interpreted by complementation, while $\mathbold 0$ is a $0$-ary function always interpreted by the empty set. Of course, the measure of a set $A$ is then given by $d(A,\emptyset)$. The limit of this class is the measure algebra of a standard atomless probability space; the automorphism group of this limit is (isomorphic to) the group of measure-preserving bijections of the unit interval.

\item The class of all finite-dimensional normed vector spaces, in the language $(\|\cdot \|, \mathbold 0,(\cdot_{\lambda})_{\lambda \in \Q},+)$ is a Fra\"iss\'e class (the Lipschitz constant for multiplication by $\lambda$ being $|\lambda|$, and the Lipschitz constant for $+$ being $2$). The limit of this space is the Gurarij space, an object that was first built by Gurarij \cite{Gurarij1966}, and whose uniqueness was proved by Lusky \cite{Lusky1976}. Seeing this space as a Fra\"iss\'e limit was first done by Ben Yaacov \cite{BenYaacov2014}, following earlier work of Henson. 
\end{itemize}

In the first two examples above, the structures considered are exactly homogeneous, in the sense that any morphism defined on a finitely generated substructure extends to an isomorphism of the whole space; accordingly, both classes satisfy an exact amalgamation property, where one can take $\varepsilon=0$ in the definition of NAP. The third example is particulary interesting in our context, in that it shows that the $\varepsilon$'s in the definitions of this section are sometimes truly needed: it is well-known that there is no separable Banach space which is both universal for finite-dimensional normed spaces and exactly homogeneous (actually, any exactly homogeneous separable Banach space is a Hilbert space, while homogeneity must already fail for $1$-dimensional subspaces in any universal separable Banach space).

\section{Approximate Ramsey property}
In this section, $\mcK$ is a Fra\"iss\'e class of finitely generated structures, with limit $\bK$. We let $G$ denote the automorphism group of $\bK$, endowed with its natural Polish topology.
To give some intuition of what is going on, we use the same vocabulary as in the discrete setting (most notably, we will speak of colorings); part of what follows in this section was already done by Pestov \cite{Pestov2002}, in the context of ultrahomogeneous metric spaces, while in the discrete context our results are just those of \cite{Kechris2005}.

\begin{defn}
Given $\bA=\str a, \bB \in \mcK$, we denote by $\emb{A}{B}$ the set of embeddings of $\bA$ in $\bB$. We endow this set with the metric $\rho_{\bar a}$ defined by 
$$\rho_{\bar a}(\alpha,\beta)= d(\alpha(\bar a),\beta(\bar a)) \ . $$
A \emph{coloring} of $\emb{A}{B}$ is a $1$-Lipschitz map $\gamma \colon (\emb{A}{B},\rho_{\bar a}) \to [0,1]$.

Note that the metric $\rho_{\bar a}$, hence also the notion of a coloring, depends on the choice of generators for $\bA$. This is a cost one has to pay if one wants to allow functional symbols: one has to name the generators of a structure whenever one computes distances. 

We define similarly $\emb{A}{K}$, a metric (still denoted by $\rho_{\bar a}$) on $\emb{A}{K}$, and colorings of $\emb{A}{K}$. 
\end{defn}

\begin{defn}
Let $\bA \le \bB \in \mcK$, $\bC \in \mcK$ and $\beta \in \emb{B}{C}$. For $F$ a subset of $\emb{A}{B}$, we define a subset $F(\beta) \subseteq \emb{A}{C}$ by
$$\alpha \in F(\beta) \ \Leftrightarrow \ \exists \delta \in F \quad \alpha= \beta \circ \delta\ . $$ 
We define similarly $F(\beta)$ for $\beta \in \emb{B}{K}$ and $F$ a subset of $\emb{A}{B}$. 
\end{defn}

Here, the situation is complicated by the fact that we allow functional symbols: if we were in a purely relational setting, $\emb{A}{B}$ would be finite for all $\bA,\bB \in \mcK$; considering the case of finite-dimensional normed vector spaces shows that this is no longer true in the functional setting.

\begin{defn}
We say that $\mcK$ has the \emph{approximate Ramsey property} if the following happens: for all $\str a=\bA \le \bB \in \mcK$, all nonempty finite $F \in \emb{A}{B}$, and all $\varepsilon >0$, there exists $\bC \in \mcK$ such that for any coloring $\gamma$ of $\emb{A}{C}$ there exists $\beta \in \emb{B}{C}$ such that the oscillation of $\gamma$ restricted to $F(\beta)$ is less that $\varepsilon$.
\end{defn}

Let us point out that, for us, the empty function on the empty set is $1$-Lipschitz, hence the assumptions above imply that $\emb{B}{C}$ is nonempty; also, the oscillation above is of course computed according to the distance $\rho_{\bar a}$, hence depends on the choice of generators for $\bA$.

\begin{prop} \label{caracARP}
The following properties are equivalent:
\begin{enumerate}[(i)]
\item $\mcK$ has the approximate Ramsey property.
\item  For any finite substructures $\str a= \bA \le \bB$ of $\bK$, for any $\varepsilon >0$, any finite nonempty $F \subseteq \emb{A}{B}$ and any coloring $\gamma$ of $\emb{A}{K}$, there exists $\beta \in \emb{B}{K}$ such that the oscillation of $\gamma$ on $F(\beta)$ is less than $\varepsilon$.

\end{enumerate}
\end{prop}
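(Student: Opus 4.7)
For the direction (i)$\Rightarrow$(ii), I would argue by a straightforward pullback. Given $\bA\le\bB$ finite substructures of $\bK$, a finite $F\subseteq\emb{A}{B}$, an $\varepsilon>0$ and a coloring $\gamma\colon\emb{A}{K}\to[0,1]$, I use (i) to obtain $\bC\in\mcK$ witnessing the approximate Ramsey property for $(\bA,\bB,F,\varepsilon)$ and fix some $\iota\in\emb{C}{K}$ (which exists because $\mcK$ is the age of $\bK$). The pullback $\gamma'(\alpha):=\gamma(\iota\circ\alpha)$ on $\emb{A}{C}$ is again 1-Lipschitz, since $\iota$ is an isometric embedding and the map $\alpha\mapsto\iota\circ\alpha$ therefore preserves $\rho_{\bar a}$. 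Applying (i) to $\gamma'$ produces $\beta'\in\emb{B}{C}$ whose oscillation on $F(\beta')$ is less than $\varepsilon$; then $\beta:=\iota\circ\beta'\in\emb{B}{K}$ works, because $F(\beta)=\iota\circ F(\beta')$ and the oscillations of $\gamma$ and $\gamma'$ on the corresponding sets coincide.

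For (ii)$\Rightarrow$(i) I would argue by contradiction and compactness. If (i) fails for some $\bA\le\bB$, finite $F$ and $\varepsilon>0$, then for every $\bC\in\mcK$ there is a \emph{bad} 1-Lipschitz coloring $\gamma_\bC$ of $\emb{A}{C}$ whose oscillation on every $F(\beta)$, $\beta\in\emb{B}{C}$, is at least $\varepsilon$. I fix a countable dense sequence $(\beta_k)_{k\in\mathbb N}$ in $\emb{B}{K}$, and for each $n$ let $\bC_n$ be the substructure of $\bK$ generated by $\beta_1(\bar b),\dots,\beta_n(\bar b)$; this is finitely generated (hence in $\mcK$), and each $\beta_k$ with $k\le n$ belongs to $\emb{B}{C_n}$. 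I choose a bad coloring $\gamma_n$ on $\emb{A}{C_n}$ and extend it, via McShane's formula truncated at $1$, to a 1-Lipschitz $[0,1]$-valued coloring $\tilde\gamma_n$ on $\emb{A}{K}$, using that the inclusion $\emb{A}{C_n}\hookrightarrow\emb{A}{K}$ is $\rho_{\bar a}$-isometric.

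The set of 1-Lipschitz maps $\emb{A}{K}\to[0,1]$ is a closed subset of the compact space $[0,1]^{\emb{A}{K}}$, so a subsequence of $(\tilde\gamma_n)$ converges pointwise to some 1-Lipschitz coloring $\gamma$. For each fixed $k$, since $\tilde\gamma_n=\gamma_n$ on the image of $\emb{A}{C_n}$, the oscillation of $\tilde\gamma_n$ on the finite set $F(\beta_k)$ is at least $\varepsilon$ for all $n\ge k$; pointwise convergence on a finite set preserves oscillation, so the oscillation of $\gamma$ on $F(\beta_k)$ is at least $\varepsilon$ for every $k$. By 1-Lipschitz continuity of $\gamma$, density of $(\beta_k)$ in $\emb{B}{K}$ and finiteness of $F$, the same inequality holds for every $\beta\in\emb{B}{K}$, contradicting (ii) applied to $\gamma$ with parameter $\varepsilon/2$.

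The crux of the argument is this last passage to the limit: one needs the bad-oscillation property, which \emph{a priori} concerns only embeddings into finite $\bC$'s from $\mcK$, to survive both the McShane extension to $\emb{A}{K}$ and the pointwise limit. Finiteness of $F$ is essential in order to pass oscillations through a pointwise limit, and the isometric character of embeddings ensures that Lipschitz colorings and their oscillations transport faithfully between $\mcK$-structures and $\bK$.
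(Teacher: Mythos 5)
Your proof is correct in substance, and it is worth separating the two directions. For $(i)\Rightarrow(ii)$ you do essentially what the paper does, in fact a little more cleanly: the paper realizes $\bC$ inside $\bK$ and then invokes homogeneity to perturb $\beta$, losing a factor of $2$ in $\varepsilon$, whereas your pullback of $\gamma$ along a fixed $\iota\in\emb{C}{K}$ gives the conclusion with no loss. For $(ii)\Rightarrow(i)$ you take a genuinely different route. The paper indexes the bad colorings by \emph{all} finite subsets $D$ of $\bK$ and forms the ultrafilter limit $\gamma=\lim_{\mcU}\gamma_{\langle D\rangle}$ along an ultrafilter concentrating on supersets of any prescribed finite set; then, for an arbitrary $\beta\in\emb{B}{K}$, the set of $D$ containing $\beta(\bar b)$ is $\mcU$-large and a pigeonhole over the finite set $F$ transfers the bad oscillation directly to $\gamma$. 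That device avoids both your McShane extension (the ultrafilter limit is defined pointwise, using only that each $\alpha(A)$ is eventually contained in $D$) and your closing density argument over $\emb{B}{K}$. Your sequential version is more concrete but requires two extra verifications, and one of them needs a patch: the assertion that a \emph{subsequence} of $(\tilde\gamma_n)$ converges pointwise because the $1$-Lipschitz maps form a closed subset of the compact space $[0,1]^{\emb{A}{K}}$ is not valid as stated --- that product is compact but not sequentially compact when $\emb{A}{K}$ is uncountable, so compactness alone yields only a convergent subnet. The repair is immediate with tools you already invoke: the $\tilde\gamma_n$ are uniformly $1$-Lipschitz and $\emb{A}{K}$ is separable, so a diagonal extraction over a countable dense set followed by equicontinuity gives a genuine pointwise-convergent subsequence (alternatively, run the whole argument along a subnet or ultrafilter, which is exactly what the paper does). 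The remaining ingredients --- finiteness of $F$ to pass oscillations through the limit, and continuity of $\beta\mapsto\beta\circ\delta$ on $(\emb{B}{K},\rho_{\bar b})$, which holds because morphisms commute with the Lipschitz function symbols so that closeness on the generators $\bar b$ propagates to the tuples $\delta(\bar a)$, allowing you to pass from the dense sequence $(\beta_k)$ to all of $\emb{B}{K}$ at the cost of $\varepsilon/2$ --- are sound.
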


\begin{proof}

$(i) \Rightarrow (ii)$: Pick finite substructures $\str a=\bA \le \bB \le \bK$, and fix $\varepsilon >0$. We may find $\bC \in \bK$ witnessing the approximate Ramsey property and assume that $\bC \le \bK$. Now let $\gamma$ be a coloring of $\emb{A}{K}$; $\gamma$ restricts to a coloring of $\emb{A}{C}$, hence we may find an embedding $\beta \colon \bA \to \bC$ such that the oscillation of $\gamma$ on $\emb{A}{C}(\beta)$
is less than $\varepsilon$. Using homogeneity of $\bK$, we may find an automorphism $\beta'$ of $\bK$ such that $d(\beta'(\alpha(\bar a)),\beta(\alpha(\bar a))) \le \varepsilon$ for all $\alpha \in F$; then the fact that $\gamma$ is $1$-Lipschitz, along with the triangle inequality, yields that the oscillation of $\gamma$ on $F(\beta')$ is less than $2 \varepsilon$. Since $\varepsilon$ was arbitrary, this concludes the proof.

\noindent$(ii) \Rightarrow (i)$: We proceed by contraposition, and assume that $(i)$ is false. 
Thus, we may find $\str a= \bA \le \bB \in \mcK$, a finite $F \subseteq \emb{A}{B}$ and $\varepsilon >0$ such that for any $\bC \in \mcK$ there exists a bad coloring $\gamma$ of $\emb{A}{C}$, 
i.e.~for any $\beta \in \emb{B}{C}$ the oscillation of $\gamma$ on $F(\beta)$ is larger than $\varepsilon$ 
(note that if there is no embedding from $\bB$ into $\bC$ the preceding property holds for any $\gamma$). 
For any finitely generated substructure $\bC$ of $\bK$, we pick such a bad coloring $\gamma_{\bC}$ of $\emb{A}{C}$.
  
We fix an ultrafilter $\mcU$ on the set of finite subsets of $\bK$ with the property that for any finite subset $D$ of $\bK$ one has 
$$\{E \subseteq \bK \colon E \text{ is finite and } D \subseteq E\} \in \mcU \ .$$
We define a mapping $\gamma \colon \emb{A}{K} \to [0,1]$ by setting $\gamma = \lim_{\mcU} \gamma_{\langle C \rangle}$. In other words, for any $\alpha \in \emb{A}{K}$, one has 
$$\gamma(\alpha)=t \Leftrightarrow \forall \delta >0 \ \{C \colon \gamma_{\langle C \rangle}(\alpha) \in [t-\delta,t+\delta]\} \in \mcU  \ .$$
Note that the set $\{C \colon \alpha(A) \subseteq C\}$ belongs to $\mathcal U$ so the above definition makes sense (for any such $C$, $\gamma_{\langle C \rangle}(\alpha)$ is well-defined, 
and the ultrafilter thinks that the set made up of all the other $C$'s is negligible). 
Also, since each $\gamma_{\langle C \rangle}$ is $1$-Lipschitz it is immediate that $\gamma$ is also $1$-Lipschitz, in other words $\gamma$ is a coloring of $\emb{A}{K}$. 
We simply have to check that $\gamma$ witnesses the fact that $\bK$ fails to have property $(ii)$.

To that end, pick $\beta \in \emb{B}{K}$ and let $\bar b$ be a finite tuple generating $\bB$. Then $U_{\beta}=\{D \colon \beta(\bar b) \subseteq D\}$ belongs to $\mcU$. Denoting by $\alpha_1,\ldots,\alpha_n$ the elements of $F(\beta)$, and using the fact that $\beta \in \emb{B}{D}$ for any $D \in U_{\beta}$, we know that for any $D \in U_{\beta}$ there exist 
$i,j \in \{1,\ldots,n\}$ such that 
$$\left | \gamma_{\langle D \rangle}(\alpha_i) - \gamma_{\langle D \rangle}(\alpha_j) \right| \ge \varepsilon\ . $$
Hence there exist $i,j \in \{1,\ldots,n\}$ such that 
$$\left \{D \colon \left | \gamma_{\langle D \rangle}(\alpha_i) - \gamma_{\langle D \rangle}(\alpha_j) \right| \ge \varepsilon \right \} \in \mcU \ .$$
Fixing such a pair $i,j$, we obtain that $|\gamma(\alpha_i)- \gamma(\alpha_j)| \ge \varepsilon$, so the oscillation of $\gamma$ on $F(\beta)$ is larger than $\varepsilon$. Since $\beta$ was arbitrary, this shows that $\bK$ does not have property $(ii)$.

\end{proof}

\begin{defn}  \label{eps-approx}
Let $\bar a \in K^n$, and $\alpha_1,\ldots, \alpha_n$ morphisms from $\bA = \str a$ to $\bK$. If $\varepsilon >0$, $\bB$ is a finitely generated substructure of $\bK$ containing $\bA$, and $\alpha_1',\ldots,\alpha_n'$ are partial morphisms from $\bB$ to $\bK$ with domain $\bA$, we say that $(\bB,\alpha_1',\ldots,\alpha_n')$ \emph{$\varepsilon$-approximates} $(\bA,\alpha_1,\ldots,\alpha_n)$ if for all $i$ one has $d(\alpha_i(\bar a),\alpha_i'(\bar a)) \le \varepsilon$.
\end{defn}

\begin{defn}
We say that $\mcK$ has the \emph{weak approximate Ramsey property} (WARP) if  for any $\bar a \in K^n$, any finitely generated substructure $\bB$ of $\bK$ containing $\bA= \str a$, any partial morphisms $\alpha_1,\ldots,\alpha_m$ of $\bB$ with domain $\bA$ and any $\varepsilon >0$, there exists a substructure $\bB'$ of $\bK$ containing $\bA$ and a finite set of partial morphisms $F=\{\alpha_1',\ldots,\alpha_m'\}$ of $\bB'$ with domain $A$ such that :
\begin{itemize}
\item $(\bB',\alpha_1',\ldots,\alpha_m')$ $\varepsilon$-approximates $(\bA,\alpha_1,\ldots,\alpha_m)$.
\item There exists a finitely generated structure $\bC \in \mcK$ such that, for any coloring $\gamma$ of $\emb{A}{C}$, there exists an embedding $\beta \colon \bB' \to \bC$ such that the oscillation of $\gamma$ on $F(\beta)$ is less than $\varepsilon$.
\end{itemize}
\end{defn}

We are now almost ready to state, and prove, the main result of this section, which is the characterization of extreme amenability 
of $G$ in terms of the approximate Ramsey property for $\mcK$ (extending Pestov's results from \cite{Pestov2002} to the context of metric Fra\"iss\'e classes). Before this, we need to recall a criterion for extreme amenability, and set our notations.

\begin{defn}
If $\bar a$ is a finite subset of elements of $\bK$, we let $d_{\bar a}$ denote the pseudometric on $G$ defined by 
$$\forall g, h \in G \ d_{\bar a}(g,h)= d(g(\bar a),h(\bar a))  \ . $$
\end{defn}

Slightly abusing notation, we will still denote by $(G,d_{\bar a})$ the metric space obtained by identyfing elements $g,h$ such that $d_{\bar a}(g,h)=0$.

The pseudometric $d_{\bar a}$ is obviously related to the metric $\rho_{\bar a}$ on $\emb{A}{K}$ - 
actually, it is almost the same thing, as witnessed by the following lemma.

\begin{lemma}
Let $\bA= \str a$ be a finitely generated substructure of $\bK$, and denote by $\Phi_{\bar a} \colon G \to \emb{A}{K}$ the mapping defined by
$\Phi_{\bar a}(g)=g_{|A} $.
Then $\Phi_A$ is a distance-preserving map from $(G,d_{\bar a})$ into $(\emb{A}{K},\rho_{\bar a})$ and $\Phi_{\bar a}(G)$ is dense in $\emb{A}{K}$.
\end{lemma}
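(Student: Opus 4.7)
The plan is to verify both assertions directly from the definitions, using homogeneity of $\bK$ for the density statement.

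First, the isometry claim is purely formal. For $g,h \in G$, unwinding the definitions gives
\[
\rho_{\bar a}(\Phi_{\bar a}(g), \Phi_{\bar a}(h)) = d(g_{|A}(\bar a), h_{|A}(\bar a)) = d(g(\bar a), h(\bar a)) = d_{\bar a}(g,h),
\]
so $\Phi_{\bar a}$ is distance-preserving on the quotient metric space $(G, d_{\bar a})$. The only point worth noting is that $\Phi_{\bar a}(g)$ is indeed an element of $\emb{A}{K}$: since $g$ is an automorphism of $\bK$ and $\bA = \str a$ is a substructure, the restriction $g_{|A}$ commutes with all symbols of $\mcL$ and (as observed in the section on morphisms) is automatically an isometric embedding.

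For density, I would fix $\alpha \in \emb{A}{K}$ and $\varepsilon > 0$, and seek $g \in G$ with $\rho_{\bar a}(\Phi_{\bar a}(g), \alpha) < \varepsilon$, i.e.\ $d(g(\bar a), \alpha(\bar a)) < \varepsilon$. Viewing $\alpha$ as a morphism from $\str a \subseteq \bK$ into $\bK$, the homogeneity of the Fra\"iss\'e limit $\bK$ (which applies by Ben Yaacov's theorem since the age of $\bK$ is the Fra\"iss\'e class $\mcK$) produces an automorphism $\psi \in G$ with $d(\alpha(\bar a), \psi(\bar a)) < \varepsilon$. Then $\Phi_{\bar a}(\psi) = \psi_{|A}$ satisfies $\rho_{\bar a}(\Phi_{\bar a}(\psi), \alpha) < \varepsilon$, which is what we need.

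There is really no substantive obstacle here: the distance preservation is a one-line unwinding, and density is exactly the statement of homogeneity translated into the embedding-space language. The lemma is essentially a dictionary between the two viewpoints (acting on tuples versus restricting to substructures), and its value lies in enabling later arguments to pass freely between the pseudometric $d_{\bar a}$ on $G$ and the metric $\rho_{\bar a}$ on $\emb{A}{K}$.
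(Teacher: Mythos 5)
Your proof is correct and follows exactly the same route as the paper, which simply observes that distance preservation is immediate from the definitions of $d_{\bar a}$ and $\rho_{\bar a}$ and that density of $\Phi_{\bar a}(G)$ is a restatement of the homogeneity of $\bK$. You have merely spelled out the unwinding in more detail, which is fine.
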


\begin{proof}
The first part of the statement is obvious from the definitions of $d_{\bar a},\rho_{\bar a}$. 
The fact that $\Phi_A(G)$ is dense in $\emb{A}{K}$ is equivalent to saying that $\bK$ 
is homogeneous.
\end{proof}

In particular, any $1$-Lipschitz map $f \colon (G,d_{\bar a}) \to [0,1]$ uniquely extends to a coloring $\gamma_f$ of $\emb{A}{K}$ (by identifying $(G,d_{\bar a})$ and $\Phi_{\bar a}(G)$), 
while any coloring $\gamma$ of $\emb{A}{K}$ restricts to a $1$-Lipschitz map $f_{\gamma} \colon (G,d_{\bar a}) \to [0,1]$. 

Finally, we state a criterion of extreme amenability for a Polish group; this is essentially Theorem 2.1.11 in Pestov's book \cite{Pestov2006a}, reformulated to fit the fact that 
we consider $G$ as the automorphism group of a Fra\"iss\'e limit, and so we have a natural directed collection of left-invariant pseudometrics defining 
the topology of $G$: the metrics $d_{\bar a}$ we introduced above, where $\bar a$ ranges over all finite tuples of elements of the Fra\"iss\'e limit.

\begin{prop}\label{caracPestov}
$G$ is extremely amenable if, and only if, for any finite tuple $\bar a$ of $K$ the left-translation action of $G$ on $(G,d_{\bar a})$ is finitely oscillation stable, i.e:

For any finite subset $F$ of $G$, any $\varepsilon >0$ and any $1$-Lipschitz map $f \colon (G,d_{\bar a}) \to [0,1]$, 
there exists $g \in G$ such that the oscillation of $f$ on $gF$ is less than $\varepsilon$.

\end{prop}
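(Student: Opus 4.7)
The plan is to reduce Proposition~\ref{caracPestov} to Pestov's criterion (Theorem~2.1.11 of \cite{Pestov2006a}), which characterizes extreme amenability of a topological group $G$ by the finite oscillation stability of the left-translation action on $(G, d)$ for every continuous left-invariant pseudometric $d$ on $G$. The key observation is that the family $\{d_{\bar a}\}$ indexed by finite tuples $\bar a$ in $K$ is a \emph{directed} collection of continuous left-invariant pseudometrics --- directed via the concatenation of tuples, since $d_{\bar a \bar b}(g,h) = \max(d_{\bar a}(g,h), d_{\bar b}(g,h))$ --- and it generates the topology of $G$. Consequently every other continuous left-invariant pseudometric $d$ on $G$ satisfies an estimate of the form $d(g,h) < \varepsilon$ whenever $d_{\bar a}(g,h) < \delta$, for some $\bar a$ and $\delta > 0$ depending on $\varepsilon$.

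The forward direction of the proposition is then immediate: if $G$ is extremely amenable, Pestov's criterion applied to the pseudometric $d_{\bar a}$ itself yields the stated oscillation condition on every $1$-Lipschitz map $f \colon (G, d_{\bar a}) \to [0,1]$.

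For the converse, assume the condition holds for every $\bar a$, and fix a continuous left-invariant pseudometric $d$ on $G$, a finite $F \subseteq G$, a $1$-Lipschitz map $f \colon (G, d) \to [0,1]$, and $\varepsilon > 0$. Continuity of $d$ at $1_G$ provides $\bar a$ and $\delta > 0$ such that $d_{\bar a}(g,h) < \delta \Rightarrow d(g,h) < \varepsilon/3$ (first at the identity, then uniformly by left-invariance of both metrics). The plan is to smooth $f$ using the infimal convolution
\[
\tilde f(g) := \inf_{h \in G}\bigl(f(h) + L\,d_{\bar a}(g,h)\bigr), \qquad L := \max(1, 1/\delta).
\]
Routine verifications show that $\tilde f$ takes values in $[0,1]$, is $L$-Lipschitz on $(G, d_{\bar a})$, and satisfies $\|f - \tilde f\|_{\infty} \le \varepsilon/3$. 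Applying the hypothesis to the rescaled map $\tilde f / L$, which is $1$-Lipschitz from $(G, d_{\bar a})$ into $[0, 1/L] \subseteq [0,1]$, with tolerance $\varepsilon/(3L)$, produces $g \in G$ with $\mathrm{osc}(\tilde f|_{gF}) < \varepsilon/3$, whence $\mathrm{osc}(f|_{gF}) < \varepsilon$.

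The only genuine step is this approximation in the converse; the main difficulty is purely bookkeeping --- keeping track of how the Lipschitz constant $L$ introduced by the smoothing interacts with the tolerance $\varepsilon$ and with the range $[0,1]$ required by the hypothesis. The infimal-convolution trick handles this uniformly and adds no new conceptual content.
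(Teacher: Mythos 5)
Your proposal is correct and follows the same route as the paper, which states this proposition without proof as a reformulation of Pestov's Theorem~2.1.11, relying precisely on the fact that the $d_{\bar a}$ form a directed family of continuous left-invariant pseudometrics generating the topology of $G$. Your infimal-convolution step supplies the approximation argument that the paper leaves implicit in the word ``reformulated,'' and the bookkeeping ($L\delta \ge 1$ forcing $\|f-\tilde f\|_\infty \le \varepsilon/3$, then rescaling by $L$) checks out.
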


\begin{theorem}\label{maincarac}
The following are equivalent, for a Fra\"iss\'e metric class $\mcK$ and $G$ the automorphism group of its limit, endowed with its natural Polish topology:
\begin{enumerate}[(i)]
\item $G$ is extremely amenable.
\item $\mcK$ has the approximate Ramsey property.
\item $\mcK$ has the weak approximate Ramsey property.
\end{enumerate}
\end{theorem}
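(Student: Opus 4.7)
The plan is to establish the cycle $(i) \Rightarrow (ii) \Rightarrow (iii) \Rightarrow (i)$. The main tool is the correspondence $f \leftrightarrow \gamma_f$ between $1$-Lipschitz functions on $(G, d_{\bar a})$ and colorings of $\emb{A}{K}$ provided by the isometric dense embedding $\Phi_{\bar a}$ of the preceding lemma, which lets one move freely between Pestov's criterion (Proposition~\ref{caracPestov}) and the $\bK$-version of the approximate Ramsey property (Proposition~\ref{caracARP}(ii)). Homogeneity of $\bK$ is invoked repeatedly to replace morphisms between finitely generated substructures by global automorphisms, up to any prescribed error.

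For $(i) \Rightarrow (ii)$, I verify condition (ii) of Proposition~\ref{caracARP}. Starting from $\str a = \bA \le \bB \le \bK$, a finite $F = \{\alpha_1, \ldots, \alpha_m\} \subseteq \emb{A}{B}$, a coloring $\gamma$ of $\emb{A}{K}$, and $\varepsilon > 0$, I use homogeneity to pick $h_i \in G$ whose restrictions to $\bA$ are within $\varepsilon$ of $\alpha_i$ in $\rho_{\bar a}$. Applying Pestov's criterion to $f_\gamma$ and $\{h_1, \ldots, h_m\}$ gives $g \in G$ on which $f_\gamma$ has small oscillation over $g\{h_i\}$, and since $g$ is an isometry, $\beta := g|_B \in \emb{B}{K}$ witnesses small oscillation of $\gamma$ on $F(\beta) = \{g \circ \alpha_i\}$. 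The converse $(ii) \Rightarrow (i)$ is symmetric: setting $\bB = \langle \bar a, g_1(\bar a), \ldots, g_m(\bar a)\rangle$ and $\alpha_i = g_i|_A$, I apply (ii) to $\gamma_f$ with $F' = \{\alpha_i\}$, obtain $\beta \in \emb{B}{K}$ with small oscillation of $\gamma_f$ on $F'(\beta)$, and use homogeneity to approximate $\beta$ on $\bB$ by an automorphism $g \in G$; this $g$ witnesses Pestov's criterion for $f, F, \varepsilon$.

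The implication $(ii) \Rightarrow (iii)$ is immediate: in the definition of WARP, one simply takes $\bB' = \bB$ and $\alpha_i' = \alpha_i$, and the witness $\bC$ is the one supplied by ARP applied to $\bA \le \bB$, $F = \{\alpha_i\}$, and $\varepsilon$. The real work is in $(iii) \Rightarrow (i)$. Given $\bar a$, a finite $F = \{g_1, \ldots, g_m\} \subseteq G$, $\varepsilon > 0$, and a $1$-Lipschitz $f$ on $(G, d_{\bar a})$, I form $\bB$ and $\alpha_i = g_i|_A$ as above and invoke WARP with a small parameter $\varepsilon'$ to be chosen. This yields $\bB' \le \bK$ containing $\bA$, morphisms $\alpha_i' \colon \bA \to \bB'$ with $d(\alpha_i'(\bar a), g_i(\bar a)) \le \varepsilon'$, and a structure $\bC \in \mcK$ with the Ramsey-type condition. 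Fixing any embedding $\iota \colon \bC \to \bK$ (which exists because $\mcK$ is the age of $\bK$), I pull back $\gamma_f$ to a $1$-Lipschitz coloring $\gamma(\delta) := \gamma_f(\iota \circ \delta)$ of $\emb{A}{C}$, and WARP produces $\beta \colon \bB' \to \bC$ with small oscillation of $\gamma$ on $\{\beta \alpha_i'\}$, i.e., of $\gamma_f$ on $\{\iota \beta \alpha_i'\}$. Homogeneity of $\bK$ then provides $g \in G$ approximating the morphism $\iota \circ \beta \colon \bB' \to \bK$ at a generating tuple of $\bB'$, so that the triangle inequality bounds the oscillation of $f$ on $gF$ by a sum of three small quantities: the WARP oscillation, the homogeneity error, and the perturbation $\alpha_i' \approx g_i|_A$. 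Choosing $\varepsilon'$ and the homogeneity error small enough gives Pestov's criterion.

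The main obstacle is the bookkeeping in $(iii) \Rightarrow (i)$: one has to ensure that homogeneity produces an automorphism $g \in G$ close to $\iota \circ \beta$ not only on a fixed generating tuple of $\bB'$ but also at each $\alpha_i'(\bar a)$. This is arranged by enlarging the generating tuple used in homogeneity to contain the $\alpha_i'(\bar a)$ (which lie in $\bB'$ by construction); the Lipschitz assumptions on the language then guarantee that all required distances remain controlled. Everything else reduces to standard triangle-inequality estimates and careful choice of the approximation parameters.
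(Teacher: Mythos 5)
Your proposal is correct and follows essentially the same route as the paper: the cycle $(i)\Rightarrow(ii)\Rightarrow(iii)\Rightarrow(i)$, with $(i)\Leftrightarrow(ii)$ mediated by the correspondence $f\leftrightarrow\gamma_f$ and Pestov's criterion, $(ii)\Rightarrow(iii)$ trivial, and $(iii)\Rightarrow(i)$ via WARP plus homogeneity (the paper simply assumes $\bC\le\bK$ where you introduce an embedding $\iota$, an inessential difference). Your closing remark about enlarging the generating tuple so that homogeneity controls the points $\alpha_i'(\bar a)$ is exactly the bookkeeping the paper performs implicitly.
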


\begin{proof}
$(i) \Rightarrow (ii)$: We assume that $G$ is extremely amenable, and we want to show that $\bK$ satisfies condition $(ii)$ of Proposition \ref{caracARP}.
To that end, we fix finitely generated substructures $\str a= \bA \le \bB$ of $\bK$ and $\varepsilon >0$, and consider a coloring $\gamma$ of $\emb{A}{K}$. We also let $\alpha_1,\ldots,\alpha_n$ enumerate the elements of some finite nonempty $F \subseteq\emb{A}{B}$, and use the approximate ultrahomogeneity of $\bK$ to find $g_1,\ldots,g_n \in G$ such that $\rho_{\bar a}(\alpha_i,{g_i}_{|A}) \le \varepsilon$. 

We set $F=\{g_1,\ldots,g_n\}$, and consider the $1$-Lipschitz function $f_{\gamma} \colon (G,d_{\bar a}) \to [0,1]$ induced by $\gamma$. By Proposition \ref{caracPestov}, 
we know that there exists $g \in G$ such that the oscillation of $f_{\gamma}$ on $gF$ is less than $\varepsilon$.

Note that we have for all $i \in \{1,\ldots,n\}$ that $\rho_{\bar a}(g{g_i}_{|A},g\alpha_i)=\rho_{\bar a}({g_i}_{|A},\alpha_i) \le \varepsilon$; the triangle inequality implies that, for any $i,j$:
$$\left|\gamma(g \alpha_i) - \gamma (g \alpha_j) \right| \le \left|\gamma(g\alpha_i)- \gamma(g{g_i}_{|A}) \right |
+\left|\gamma(g{g_i}_{|A}) - \gamma(g{g_j}_{|A}) \right|
+\left|\gamma(g{g_j}_{|A}) - \gamma (g \alpha_j) \right|$$

The first and third term above are both smaller than $\varepsilon$ since $\gamma$ is $1$-Lipschitz, and the term in the middle is $|f_{\gamma}(gg_i)-f_{\gamma}(gg_j)|$, which is less than $\varepsilon$ by definition of $g$. Denoting by $\beta$ the restriction of $g$ to $B$, we just proved that the oscillation of $\gamma$ on $\emb{A}{K}(\beta)$ is less than $3\varepsilon$, and this proves that $\mcK$ has the approximate Ramsey property.\\

\noindent $(ii) \Rightarrow (iii)$ is obvious. \\

\noindent $(iii) \Rightarrow (i)$: assume that $\mcK$ has the WARP, let $\bar a$ be a finite tuple of elements of $K$, $f \colon (G,d_{\bar a}) \to [0,1]$ be a $1$-Lipschitz map, and fix $\varepsilon >0$. Set $\bA= \str a$, and recall that $\gamma_f$ denotes the coloring of $\emb{A}{K}$ induced by $f$. We fix $g_1,\ldots,g_n \in G$ and assume without loss of generality that $g_1=id$. Define 
$\bB =\langle \bigcup_{i=1}^n g_i(A) \rangle$.

This is a finitely generated structure of $\mcK$ containing $\bA$ hence, using the WARP, we may find $\bB' \le \bK$ and embeddings $\alpha_1,\ldots,\alpha_n \colon \bA \to \bB'$ such that: 
\begin{itemize}
\item $d(\alpha_i(\bar a),g_i(\bar a)) \le \varepsilon$ for all $i$;
\item There exists a finitely generated $\bC \in \mcK$ such that for any coloring $\gamma$ of $\emb{A}{C}$ there exists $\beta \in \emb{B'}{C}$ such that the oscillation of $\gamma$ on $\{\beta \circ \alpha_1,\ldots,\beta \circ \alpha_n\}$ is less than $\varepsilon$. 
\end{itemize}

We may, and do, assume that $\bC \le \bK$, and apply the above property to $\gamma_f$; this yields an embedding $\beta \in \emb{B'}{C}$ such that $\gamma_f$ has oscillation less than $\varepsilon$ on $\{\beta \circ \alpha_1,\ldots,\beta \circ \alpha_n\}$. Using the homogeneity of $\bK$, we may find $g_{\beta} \in G$ such that
for all $i$ one has $d(g_{\beta} \circ \alpha_i(\bar a),\beta \circ \alpha_i(\bar a)) \le \varepsilon$.

Using the triangle inequality as in the proof of $(i) \Rightarrow (ii)$, we obtain, using straightforward computations, that for any 
$i,j \in \{1,\ldots,n\}$ one has 
$$\left|f(g_{\beta}g_i)-f(g_{\beta}g_j) \right| \le 7 \varepsilon \ .$$ 
Since $\varepsilon$ was arbitrary, this is enough to show that $G$ is extremely amenable.
\end{proof}

A very similar statement was obtained by Pestov in \cite{Pestov2002} in the case of ultrahomogeneous metric spaces. He was of course 
not using the vocabulary of metric Fra\"iss\'e classes, 
so the characterization he obtained was similar to point (ii) in our Proposition \ref{caracARP} characterizing the approximate Ramsey property.\\

We should probably point out here that there are (usually) many ways to turn a given Polish group $G$ into the automorphism group of 
the Fra\"iss\'e limit of some class $\mcK$; the properties of the class $\mcK$ may vary depending on the construction chosen 
(for instance, $\mcK$ may some times have the exact amalgamation property 
and sometimes only the near-amalgamation property - this happens e.g whenever $G$ is a subgroup of the permutation group of the integers whose topology does not 
admit a compatible complete left-invariant metric), 
however the theorem above implies that the approximate Ramsey property does not depend on the particular class $\mcK$, but 
only on whether $G$ is extremely amenable or not. This is not surprising, as it reflects a similar phenomenon uncovered for discrete Fra\"iss\'e classes in \cite{Kechris2005}.\\

\noindent {\bf An example.} By the result of Gromov and Milman mentionned in the introduction, the orthogonal group of a separable, infinite-dimensional (real) Hilbert space is extremely amenable, from which we deduce that the 
isometry group of the unit sphere of the Hilbert space is extremely amenable (these groups are one and the same). Since this unit sphere is the Fra\"iss\'e limit of 
the class of finite spherical metric spaces of diameter at most $2$, we obtain the approximate Ramsey property for this class. In turn, this easily implies the ARP for the 
class of all spherical metric spaces and, going back to the group side, we thus obtain that the isometry group of the unbounded universal spherical metric space is 
extremely amenable (see \cite{Pestov2006a}, exercise 5.1.32 p112 for the definition of this space, and Blumenthal \cite{Blumenthal1970} for informations about 
spherical metric spaces).

\section{A method to prove that certain Fra\"iss\'e classes have the approximate Ramsey property}

We again denote by $\mcK$ a metric Fra\"iss\'e class, by $\bK$ its Fra\"iss\'e limit, and by $G$ the automorphism group of $\bK$.

\begin{defn}
We say that $\mcK$ has the \emph{extension property} if for any $\bA \in \mcK$ there is $\bB \in \mcK$ such that $\bA$ embeds in $\bB$ and any partial automorphism of 
$\bA$ extends to an automorphism of $\bB$. 
\end{defn}

Using a deep theorem due to Hervig and Lascar \cite{Herwig2000}, Solecki proved in \cite{Solecki2005a} that the class of finite metric spaces has the extension property. 

In general, establishing that a class has this property seems very difficult. We will make use of a variant (which is weaker in the relational context) to show that certain classes have the approximate Ramsey property.

\begin{defn}
We say that $\mcK$ has the \emph{weak extension property} if for any $\bA = \str a \le \bK^n$ and any $\alpha_1,\ldots, \alpha_m \in \emb{\bA}{K}$ there exists a finitely generated substructure $\bB$ of $\bK$ and automorphisms $g_1,\ldots,g_n$ of $\bB$ such that:
\begin{itemize} 
\item $(\bB,g_1,\ldots,g_m)$ $\varepsilon$-approximates $(\bA,\alpha_1,\ldots,\alpha_m)$
\item $g_1,\ldots,g_m$ generate a relatively compact subgroup of $\Aut(\bB)$.
\end{itemize}
\end{defn}

The second condition may seem surprising; it is nevertheless essential for our purposes. This condition is empty whenever one only considers relational structures, or more generally when automorphism groups of elements of $\mcK$ are compact, which is the case in all our examples.

\begin{defn}
Whenever $(X,d)$ is a metric space, we let $d_n$ denote the normalized $\ell_1$-metric on $X^n$, defined by 
$$d_n\left((x_1,\ldots,x_n),(x'_1,\ldots,x'_n) \right)=\frac{1}{n}\sum_{j=1}^n d(x_i,x'_i)\ . $$ 
\end{defn}

\begin{defn}
We say that $\mcK$ is a \emph{$\ell_1$ metric Fra\"iss\'e class} (or that $\mcK$ \emph{has the $\ell_1$ property}) if for all $\bA \in \mcK$ and all $n$ one may turn $(A^n,d_n)$ 
into an element $\bA^n$ belonging to $\mcK$ in such a way that, whenever $(g_1,\ldots,g_n)$ are automorphisms of $\bA$, the diagonal map 
$$a \mapsto (g_1(a),\ldots,g_n(a)) $$
is an embedding of $\bA$ into $\bA^n$.
\end{defn}

For example, the class of all finite metric spaces is a metric Fra\"iss\'e class with both the $\ell_1$ property and the extension property. Other examples of classes with the $\ell_1$ property include 
the class $\mcK_X$ of finite metric spaces containing a common finite metric space $X$, the class of finite probability algebras, and the class of finite-dimensional normed vector spaces. A non-example would be, of course, the class of euclidean 
metric spaces.

The main result of this section is the following.

\begin{theorem}\label{l1-Ramsey}
Let $\mcK$ be a $\ell_1$ metric Fra\"iss\'e class with the weak extension property. Then $\mcK$ has the weak approximate Ramsey property, so $G=\Aut(Flim(\mcK))$ is extremely amenable.
\end{theorem}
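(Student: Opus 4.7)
The plan is to use the weak extension property to replace the partial morphisms $\alpha_1,\ldots,\alpha_m$ by automorphisms of some $\bB' \le \bK$ generating a \emph{compact} subgroup $H$ of $\Aut(\bB')$, and then exploit the $\ell_1$ property to realize large products of $\bB'$ inside $\mcK$ on which a concentration of measure argument, applied to the product Haar measure on $H$, will force the oscillation of any coloring to be small on a suitable finite set of embeddings.

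Concretely, starting from the data in the definition of WARP---a tuple $\bar a \in K^n$ generating $\bA$, a finitely generated $\bB \le \bK$ containing $\bA$, partial morphisms $\alpha_1,\ldots,\alpha_m \colon \bA \to \bB$, and $\varepsilon > 0$---I would first invoke the weak extension property (viewing each $\alpha_i$ as an element of $\emb{A}{K}$) to produce $\bB' \le \bK$ finitely generated and automorphisms $g_1,\ldots,g_m \in \Aut(\bB')$ such that $(\bB', g_1|_A,\ldots, g_m|_A)$ $\varepsilon$-approximates $(\bA,\alpha_1,\ldots,\alpha_m)$ and $H := \overline{\langle g_1,\ldots,g_m\rangle}$ is compact; I would set $\alpha_i' := g_i|_A$. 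For each integer $N$, the $\ell_1$ property provides a structure $\bC_N := (\bB')^N \in \mcK$ and tells us, applied to any $N$-tuple of elements of $H$, that for every $\bar h = (h_1,\ldots,h_N) \in H^N$ the diagonal map $\beta_{\bar h} \colon b \mapsto (h_1(b),\ldots,h_N(b))$ is an embedding of $\bB'$ into $\bC_N$. Given an arbitrary coloring $\gamma$ of $\emb{A}{C_N}$, I would then consider the functions $f_i \colon H^N \to [0,1]$ defined by $f_i(\bar h) := \gamma(\beta_{\bar h} \circ \alpha_i')$; unpacking the normalized $\ell_1$ distance on $\bC_N$ and using that $\gamma$ is $1$-Lipschitz shows that perturbing a single coordinate of $\bar h$ changes $f_i$ by at most $D/N$, where $D := \max_i \mathrm{diam}\{h \cdot g_i(\bar a) : h \in H\}$ is finite by compactness of $H$.

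Two ingredients then finish the argument. First, an invariance computation: for $k \in H$ the right translation $T_k \colon \bar h \mapsto (h_1 k,\ldots,h_N k)$ preserves the product Haar measure $\mu^N$, and the choice $k := g_j g_i^{-1} \in H$ yields $f_i \circ T_k = f_j$, so the $f_i$ all share a common expectation $M$. Second, McDiarmid's bounded-differences inequality applied to each $f_i$ gives $\mu^N\bigl(|f_i - M| > t\bigr) \le 2\exp(-2Nt^2/D^2)$, so a union bound shows that if $N$ is chosen---independently of $\gamma$---so that $2m\exp\bigl(-N\varepsilon^2/(2D^2)\bigr) < 1$, there must exist $\bar h \in H^N$ with $|f_i(\bar h) - M| \le \varepsilon/2$ for every $i$; the oscillation of $\gamma$ on $F(\beta_{\bar h}) = \{\beta_{\bar h} \circ \alpha_i' : 1 \le i \le m\}$ is then at most $\varepsilon$. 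Taking $\bC := \bC_N$ verifies the weak approximate Ramsey property, and extreme amenability of $G$ follows from Theorem \ref{maincarac}. The main subtlety I expect is the invariance identity $f_i \circ T_k = f_j$: it is exactly here that the right action of $H$ on itself (responsible for $\mu^N$-invariance) must mesh with the left action of $H$ on $\bB'$ (which defines the $\beta_{\bar h}$ and governs $D$); once this is checked, the concentration and union-bound steps are standard.
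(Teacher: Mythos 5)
Your proof is correct and follows essentially the same route as the paper: pass to automorphisms $g_1,\ldots,g_m$ of some $\bB'$ generating a compact group $H$ via the weak extension property, embed $\bB'$ diagonally into the $\ell_1$-power $\bC_N=(\bB')^N$ via tuples $\bar h\in H^N$, use right translation by $g_jg_i^{-1}$ (which preserves the product Haar measure) to equalize the means of the $f_i$, and then apply concentration of measure on $H^N$. The only difference is cosmetic: you invoke McDiarmid's bounded-differences inequality for the product measure, whereas the paper quotes the L\'evy concentration property of products of compact groups equipped with a bi-invariant metric; both deliver the same estimate and your constants check out.
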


\begin{proof}
Consider finitely generated structures $\bA=\str a \le \bB \in \mcK$, and fix $\varepsilon >0$. 

Let $\alpha_1,\ldots,\alpha_k$ enumerate a finite set of partial automorphisms of $\bB$ 
with domain $A$.
Applying the weak extension property, we know that we may find some finitely generated $\bB' \le \bK$ containing $\bA$
and automorphisms $g_1,\ldots,g_k$ of $B'$ such that $(\bB',g_1,\ldots,g_k)$ $\varepsilon$-approximate $(\bA,\alpha_1,\ldots,\alpha_k)$ 
and the closure $H$ of the subgroup of $\Aut(\bB')$ generated by $g_1,\ldots,g_k$ is  compact. 

Fix a set of generators $b_1,\ldots,b_q$ of $\bB'$ containing $g_i(\bar a)$ for all $i$, and endow $H$ with the bi-invariant metric $\delta$ defined by 
$$\delta(h_1,h_2) = \max \{d(h_1h (\bar b), h_2h (\bar b)) \colon h \in H \} \ . $$

For any $n$, we turn $((\bB')^n,d_n)$ into an element of $\mcK$ in a way that witnesses the $\ell_1$-property, and call this structure $\bC_n$. 
We want to show that for $n$ big enough, for any coloring $\gamma$ of $\emb{A}{C_n}$, there is $\beta \in \emb{B'}{C_n}$ such that the oscillation of $\gamma$ on 
$\{\beta \circ {g_1}_{|A},\ldots,\beta \circ {g_k}_{|A}\}$ is less than $\varepsilon$. 
To see this, we will use concentration of measure - this is where the $\ell_1$ property and our compactness assumption are useful. 

For any integer $n$, endow the group $H^n$ with the normalized $\ell_1$ metric $\delta_n$.
A coloring $\gamma$ of $\emb{A}{C_n}$ naturally induces a mapping, denoted by $\gamma'$, from $H^n \to  [0,1]$, since any $(h_1,\ldots,h_n)$ gives an embedding of $\bA$ into $C_n$. It follows from our definitions that $\gamma'$ is a $1$-Lipschitz map from $(H^n,\delta_n)$ to $[0,1]$.

Now, endow $H_n$ with its Haar measure $\mu_n$, and recall that the concentration of measure phenomenon for compact groups (see Theorem 4.3.19 in \cite{Pestov2006a}, or Theorem 4.2 in \cite{Ledoux2001}) ensures that for $n$ big enough we have, for any $1$-Lipschitz function 
$f \colon (H^n,\delta_n) \to [0,1]$, that 
\begin{equation} \tag{$\ast$} \label{ast}
\mu_n \left(\{\bar h \colon \left| f(\bar h)-E(f) \right| \le \varepsilon\} \right) > 1- \frac{1}{k} \ . 
\end{equation}
($E(f)$ denotes the expected value of $f$, i.e.~$\int f d\mu_n$)

Find such an $n$. We claim that $C_n$ has the desired property. 
To see this, we first define for all $i \in \{1,\ldots,k\}$ a measure-preserving bijection $\Theta_i$ from $H^n$ to itself,
obtained by setting $\Theta_i(h_1,\ldots,h_n)= (h_1 g_i,\ldots,h_n g_i)$.

Now, fix a coloring $\gamma$ of $\emb{A}{C_n}$ and let $\gamma'$ denote the corresponding mapping from $H^n$ to $[0,1]$.
Since each $\Theta_i$ preserves the measure $\mu_n$, and $\gamma'$ is $1$-Lipschitz, we may apply \eqref{ast} to see that there exists some 
$\bar h \in H^n$ such that 
$$\forall i \in \{1,\ldots,k\} \ \left|\gamma'(\Theta_i(\bar h)) - E(\gamma')  \right | \le \varepsilon$$
From this, it follows that 
$$\forall i,j \in \{1,\ldots,k\} \left|\gamma'(\Theta_i(\bar h))-\gamma'(\Theta_j(\bar h)) \right| \le 2 \varepsilon \ . $$
Going back to the definition of $\gamma'$, we have just obtained 
$$\forall i, j \in \{1,\ldots,k\} \ \left|\gamma((h_1,\ldots,h_n) \circ g_i) - \gamma(h_1,\ldots,h_n) \circ g_j)  \right| \le 2 
\varepsilon \ .$$
In other words, setting $\beta=(h_1,\ldots,h_n)$, we have shown that $\mcK$ has the WARP.

\end{proof}

Note that this enables one to recover the fact that the isometry group of the Urysohn space $\bU$ is extremely amenable. Also, 
for $X$ a finite metric space, the limit of $\mcK_X$ is $\bU$ endowed with a distinguished copy of $X$, whose automorphism group is exactly the (pointwise) 
stabilizer of $X$. So we see that the group of isometries that fix $X$ pointwise is extremely amenable for any finite subset $X$ of $\bU$; similarly for the group of measure-preserving bijections of a standard atomless probability space. 
Actually, in all the previous cases, Pestov's ideas, as reformulated in Theorem \ref{Levy} below, imply that the groups are L\'evy. 

It is relatively easy to use the theorem above in order to provide somewhat artificial new examples of extremely amenable groups; for instance, one can show that the class of all finite metric spaces endowed with a $1$-Lipschitz unary predicate is a Fra\"iss\'e class satisfying all our conditions above. A more interesting candidate for application of our approach is the isometry group of the Gurarij space $\bG$: certainly the class of finite-dimensional Banach spaces has the $\ell_1$ property; it is not clear whether this class satisfies the weak extension property. It is easily seen that this is the case if and only if the set of all $n$-tuples which generate a relatively compact subgroup is dense in $\Iso(\bG)^n$ for all $n$. We already do not know whether this is true when $n=1$.

Let us conclude these notes by stating a criterion for a Polish group to be L\'evy - this consists simply in isolating the ideas used by Pestov in \cite{Pestov2006a} to show that the isometry group of Urysohn's universal metric space is a L\'evy group. The proof is a straightforward adaptation of Pestov's proof as presented in 
\cite{Pestov2006a}, so we content ourselves 
with stating the criterion below without proof.

\begin{theorem}\label{Levy}
Let $\mcK$ be a metric Fra\"iss\'e class with limit $\bK$. Assume that:
\begin{itemize}
\item $\mcK$ has the extension property.
\item $\mcK$ has the $\ell_1$ property.
\item For any finite substructure $\bA$ of $\bK$, and any finite group $H$ acting on $\bA$ by automorphisms, the action $H \actson \bA$ extends to an action $H \actson \bK$ by automorphisms.
\end{itemize}
Then $G=\Aut(\bK)$, endowed with its usual Polish topology, is a L\'evy group.

Actually, there is a increasing chain of \emph{finite} subgroups which concentrates (for the normalized counting measure) and whose union is dense in $G$.
\end{theorem}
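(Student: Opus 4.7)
The plan is to adapt Pestov's argument for the isometry group of the Urysohn space, producing an increasing chain of finite subgroups $H_n \le G$ whose union is dense in $G$ and whose normalized counting measures form a L\'evy family; by Pestov's criterion this suffices to conclude that $G$ is a L\'evy group. I would begin by fixing a countable dense substructure with generating sequence $(a_i)$ of $\bK$ and setting $\bA_n = \langle a_1, \ldots, a_n \rangle$. Using the extension property inductively, I build a chain $\bA_n \le \bB_n \le \bA_{n+1}$ in $\mcK$ such that every partial automorphism of $\bA_n$ extends to an automorphism of $\bB_n$; since in the intended applications the elements of $\mcK$ have finite underlying sets, $\Aut(\bB_n)$ is itself a finite group, which I call $F_n$.

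Using the $\ell_1$ property, form $\bB_n^{m_n} \in \mcK$ with $m_n$ to be pinned down below. The finite group $F_n^{m_n}$ acts coordinatewise on $\bB_n^{m_n}$ by automorphisms; by hypothesis this action extends to an automorphism action of $F_n^{m_n}$ on $\bK$, whose image in $G$ I denote $H_n$. By careful bookkeeping (choosing the extensions compatibly so that the diagonal copies nest inside $\bK$) one arranges $H_n \le H_{n+1}$. For density, given $g \in G$, a finite tuple $\bar a \subseteq K$, and $\varepsilon > 0$, take $n$ so large that both $\bar a$ and $g(\bar a)$ lie $\varepsilon$-close to $\bA_n$; the map $a_i \mapsto g(a_i)$ is then an approximate partial automorphism of $\bA_n$, which extends approximately to some $\sigma \in F_n$, and the diagonal element $(\sigma, \ldots, \sigma) \in F_n^{m_n} \le H_n$ approximates $g$ on $\bar a$ to within $O(\varepsilon)$.

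For concentration, equip $F_n$ with a bi-invariant metric coming from its action on $\bB_n$ and $F_n^{m_n}$ with the normalized $\ell_1$ product metric and normalized counting measure. The standard concentration of measure for $\ell_1$ powers of compact groups (cf.~Theorem 4.3.19 in \cite{Pestov2006a}) gives a Gaussian concentration function of order $e^{-c m_n \varepsilon^2}$. Under the embedding $H_n \hookrightarrow G$ and the diagonal identification $\bB_n \hookrightarrow \bB_n^{m_n} \hookrightarrow \bK$, this $\ell_1$ metric agrees with the pseudometric $d_{\bar a_n}$ on $G$ restricted to $H_n$, for $\bar a_n$ a generating tuple of the diagonal copy of $\bB_n$; choosing the $m_n$'s to grow fast enough along a countable generating family of such pseudometrics secures simultaneous concentration, hence the L\'evy property.

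The main obstacle I expect is the coherence of the induction: ensuring that the extension of the $F_{n+1}^{m_{n+1}}$-action to $\Aut(\bK)$ granted at stage $n+1$ restricts on the previously chosen diagonal copy of $F_n$ to the automorphisms already realized inside $G$ at stage $n$, so that the chain is genuinely nested inside a single copy of $G$ rather than being merely a compatible sequence of abstract embeddings. This calls for a back-and-forth style argument exploiting the homogeneity of $\bK$, and is what makes the bookkeeping delicate; the concentration estimate and the density argument themselves are fairly routine once this compatibility is secured.
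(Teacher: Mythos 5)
Your outline is exactly the ``straightforward adaptation of Pestov's proof'' that the authors allude to: the paper deliberately states Theorem~\ref{Levy} \emph{without} proof, referring the reader to the argument for $\Iso(\bU)$ in \cite{Pestov2006a}, and your steps (extension property to get finite groups $F_n$, the $\ell_1$ powers $\bB_n^{m_n}$ with the coordinatewise $F_n^{m_n}$-action extended to $\bK$ via the third hypothesis, concentration for the normalized counting measure on $\ell_1$ powers, and density via approximate extension of partial automorphisms) reproduce that adaptation faithfully. The one piece you flag but do not execute --- the back-and-forth bookkeeping ensuring the extensions to $\bK$ are chosen compatibly so that the $H_n$ genuinely nest --- is indeed the only nontrivial point, and your identification of it as such is consistent with how Pestov's original argument handles it.
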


\bibliography{mybiblio}

\end{document}